\documentclass[12pt]{extarticle}
\usepackage{extsizes}
\usepackage[reqno]{amsmath} 
\usepackage{amsmath}
\usepackage{amssymb}
\usepackage{amsthm}
\usepackage{amscd}
\usepackage{amsfonts}
\usepackage{graphicx}
\usepackage{dsfont}
\usepackage{fancyhdr}
\usepackage{enumerate}
\usepackage{youngtab}
\usepackage[utf8]{inputenc}
\usepackage{comment}



\usepackage{subfigure}
\usepackage[margin=1.5in]{geometry}
\usepackage{graphicx}
\usepackage[noend]{algorithmic}
\usepackage{algorithm}
\usepackage{color}
\usepackage{hyperref}
\usepackage{notation}

\theoremstyle{definition}
\newtheorem{corollary}{Corollary}

\newtheorem{lemma}[corollary]{Lemma}

\newtheorem{theorem}[corollary]{Theorem}

\begin{document}

\AtEndDocument{%
  \par
  \medskip
  \begin{tabular}{@{}l@{}}%
    \textsc{Marcel K. de Carli Silva}\\
    \textsc{Dept. of Computer Science - USP - São Paulo, Brazil}\\
    \textit{E-mail address}: \texttt{mksilva@ime.usp.br} \\ \ \\
    \textsc{Gabriel Coutinho}\\
    \textsc{Dept. of Computer Science - UFMG - Belo Horizonte, Brazil}\\
    \textit{E-mail address}: \texttt{gabriel@dcc.ufmg.br} \\ \ \\
    \textsc{Rafael Grandsire}\\
    \textsc{Dept. of Computer Science - UFMG - Belo Horizonte, Brazil}\\
    \textit{E-mail address}: \texttt{rafael.grandsire@gmail.com}
  \end{tabular}}

\title{An eigenvalue bound for the fractional chromatic number}
\author{Marcel K. de Carli Silva, Gabriel Coutinho\footnote{gabriel@dcc.ufmg.br} , Rafael Grandsire}
\date{\today}
\maketitle
\vspace{-0.8cm}
\begin{abstract}
    We show that Hoffman's sum of eigenvalues bound for the chromatic number is at least as good as the Lovász theta number, but no better than the ceiling of the fractional chromatic number. In order to do so, we display an interesting connection between this sum of eigenvalues bound and a generalization of the Lovász theta number introduced by Manber and Narasimhan in 1988.
\begin{center}
\textbf{Keywords}
\end{center}
Lovász theta function, sum of eigenvalues bound, chromatic number.

\end{abstract}

\section{Introduction}

Let $G$ be a simple graph on $n$ vertices, with adjacency matrix $\aA$, and let $\lambda_1 \geq \lambda_2 \geq \cdots \geq \lambda_n$ denote its eigenvalues. We denote the chromatic number of $G$ by $\chi(G)$. Hoffman \cite{HoffmanEvalsColouring} showed in 1970 that if $n \geq m \geq \chi(G) \geq 2$, then
\[
	\lambda_1 + \lambda_{n} + \lambda_{n-1} + \cdots + \lambda_{n-m+2} \leq 0.
\]
This provides a lower bound for $\chi(G)$, as this sum tries to be positive for small values of $m$. Recently in \cite{WocjanElphickDarbariSpectralBoundsQuantumPart2} it was shown that the same bound holds for the quantum chromatic number $\chi_q(G)$ replacing the role of $\chi(G)$. This bound is also trivially weaker than its more famous cousin $\chi(G) \geq 1 - (\lambda_1 / \lambda_n)$, which has been more extensively exploited in connection to other graph parameters --- see for instance Bilu \cite{Bilu}. 

Our main result in this paper is that the bound also holds for the ceiling of $\chi_f(G)$, the well-known fractional chromatic number, even when the non-zero entries of the adjacency matrix $\aA$ are allowed to be different than $1$. This result along with our other original results are in Section \ref{sec:chi}.

The proof of Hoffman's original result uses the well-known Cauchy's Interlacing Theorem, whose countless applications to spectral graph theory have been found since the seminal work of Haemers \cite{HaemersInterlacing}. The proof requires that entries of $\aA$ corresponding to non-edges be equal to $0$, but makes no requirement on the entries corresponding to edges. Diagonal entries can typically be tweaked inside the proof. So, versions of this theorem exist for other matrices that encode adjacency, for instance, Coutinho, Grandsire and Passos~\cite{CoutinhoGrandsirePassos} exploited consequences of this theorem applied to the normalized Laplacian matrix.

This freedom in choosing entries corresponding to edges leads naturally to an optimization opportunity, and more often than not, at this point, semidefinite optimization comes into play. Lovász \cite{LovaszShanon} introduced in 1979 a graph parameter which is now known as the Lovász theta number of $G$, which can be defined as the optimum value of the semidefinite program (SDP)
\[
	\vartheta(G) = \max \{ \langle \jJ , \xX \rangle : \tr \xX = 1,\ \xX_{ij} = 0 \ \forall ij \in E(G),\ \xX \succcurlyeq \0\},
\]
and showed the remarkable relation $\alpha(G) \leq \vartheta(G) \leq \chi(\ov{G})$, where $\alpha(G)$ stands for the stability number of $G$, that is, the size of the largest coclique in $G$. Here we are denoting the all-ones matrix by $\jJ$, and $\tr$ is the trace. The theory behind this parameter has been further developed and explored, notably with the introduction of the theta function and the theta body by Grötschel, Lovász and Schrijver in  \cite{LovaszGrotschelSchrijverTheta}; and extensively surveyed---famously by Knuth in \cite{KnuthSandwich}.

One interesting generalization of $\vartheta(G)$ was proposed by Narasimhan and Manber \cite{NarasimhanManberJournal,NarasimhanManberTechRep}, and further studied in \cite{shor2002lagrangian,kuryatnikova2020maximum,SotirovSinjorgo}. Let $\alpha_k(G)$ denote the number of vertices of a largest $k$\nobreakdash-colourable induced subgraph of $G$. Let $\lambda_{i}(\mM)$ denote the $i$th largest eigenvalue of a (symmetric) matrix $\mM$. They defined the parameter
\[
	\vartheta_k(G) = \min \left\{ \sum_{i = 1}^k \lambda_{i}(\jJ + \xX) : \xX_{ij} = 0 \ \forall ij \notin E(G),\ \xX^\T = \xX \right\},
\]
and showed that $\alpha_k(G) \leq \vartheta_k(G)$ (we have decided to denote by $\vartheta_k(G)$ what, in their original paper, would have been $\vartheta_k(\ov{G})$ for the sake of keeping consistency with the standard use of $\vartheta(G)$). The connection between Hoffman's result and this parameter was perhaps foreseen by Mohar and Poljak \cite[Corollary~4.16]{MoharPoljak}, who pointed out that if $k \geq \chi(G)$,  then $n \leq \alpha_k(G)$, and thus $n \leq \vartheta_k(G)$. In~Section~\ref{sec:chi}, we will show explicitly how the optimization of Hoffman's result is related to $\vartheta_k$, and we will also show that this is always at least as good as $\lceil \vartheta(\ov{G})\rceil$ but no better than $\lceil\chi_f(G)\rceil$ as a lower bound for $\chi(G)$.

\section{The \texorpdfstring{$\vartheta_k$}{ϑₖ} function} \label{sec:basics}

Let $G = (V,E)$ be a simple graph with vertex set $V$ and edge set $E$, and let $\ww \in \Rds_+^V$, that is, a non-negative vector indexed by $V$. We denote by $\sqrt{\ww}$ the vector obtained by taking the square root at each entry of $\ww$. The set of real symmetric matrices with rows and columns indexed by $V$ is denoted by $\Sds^V$, and if $\mM,\nN \in \Sds^V$, we write $\mM \succcurlyeq \nN$ to mean that $\mM - \nN$ is positive semidefinite. Define, for $0 \leq k \leq |V|$,
\begin{equation}
	\vartheta_k(G;\ww) = \min \left\{ \sum_{i = 1}^k \lambda_{i}(\sqrt{\ww}\sqrt{\ww}^\T + \zZ) : \zZ \in \Sds^V ,\, \zZ_{ij} = 0 \ \forall ij \notin E(G)\right\}. \label{varthetaKoriginal}
\end{equation}

Following a well-known result due to Ky Fan \cite{KyFanEigenvalueSum}, the optimization problem above can be written as the semidefinite program
\sdp
{\vartheta_k(G;\ww) = \hspace{2cm} \min}
{k \eta + \tr \yY\label{varthetaKprimal}}
{
\eta \in \Rds,\ \yY,\zZ \in \Sds^V, \nonumber \\
& \zZ_{ij} = 0 \ \forall ij \notin E(G), \nonumber \\
& \yY - (\sqrt{\ww}\sqrt{\ww}^\T + \zZ) + \eta \iI \succcurlyeq \0, \nonumber \\
& \yY \succcurlyeq \0,\nonumber }
and, from SDP strong duality,
\sdp
{\vartheta_k(G;\ww) = \hspace{2cm} \max}
{\langle \sqrt{\ww}\sqrt{\ww}^\T, \xX \rangle \label{varthetaKdual}}
{
\xX \in \Sds^V, \nonumber \\
& \tr \xX = k , \nonumber \\
& \xX_{ij} = 0 \ \forall ij \in E(G), \nonumber \\
& \0 \preccurlyeq \xX \preccurlyeq \iI, \nonumber}
as both optimization problems have Slater points, except for \eqref{varthetaKdual} when $k= 0$ or $k = |V|$. These formulations have been introduced by Alizadeh in \cite{AlizadehInteriorPointSDP}. Note that~\eqref{varthetaKprimal} and~\eqref{varthetaKdual} define \(\vartheta_k(G;\ww)\) for any \emph{real number} $k$ such that $0 \leq k \leq |V|$.

Fix $\ww \in \Rds_+^V$. If $\zZ$ is an optimal solution for \eqref{varthetaKoriginal}, assume we have the spectral decomposition
\begin{align}\label{eq:specdecompZ}
	\sqrt{\ww}\sqrt{\ww}^\T + \zZ = \sum_{i = 1}^n \lambda_i \vv_i \vv_i^\T.
\end{align}
With $\eta = \lambda_k$, note that the following choice for $\yY$ provides an optimal solution of \eqref{varthetaKprimal}:
\begin{align}\label{eq:specdecompY}
	\yY = \sum_{i = 1}^k (\lambda_i - \lambda_k) \vv_i \vv_i^\T.
\end{align}

If $\xX$ is optimum for \eqref{varthetaKdual}, then complementary slackness implies that $\langle \xX , \yY \rangle = \langle \iI , \yY \rangle$, hence $\langle \iI - \xX , \yY \rangle = 0$. As $\iI - \xX \succcurlyeq \0$, a well-known result due to Schur implies that $\yY = \xX \yY = \yY\xX$, leading to the partial correspondence between eigenvectors of optimal solutions of \eqref{varthetaKprimal} and \eqref{varthetaKdual}.

The recent papers \cite{kuryatnikova2020maximum,SotirovSinjorgo} have explored further properties of the parameter $\vartheta_k$ which should generalize in many cases to the weighted function $\vartheta_k(G;\ww)$. For instance, note that $\vartheta_k(G;\ww) \leq \ww^\T\1$ because $\eta = 0$, $\zZ = \0$ and $\yY = \sqrt{\ww}\sqrt{\ww}^\T$ are feasible for \eqref{varthetaKprimal}. We will also make use of the following lemma; see~\cite[Proposition 2]{SotirovSinjorgo}.

\begin{lemma} \label{lem:monotonicity}
Let $G$ be a graph with $n$ vertices. Let ${0 \leq k \leq \ell \leq n}$, not necessarily integers, and let $\ww \in \Rds_+^{V(G)}$. Then
\begin{equation*}
  \vartheta_k(G;\ww)\leq \vartheta_\ell(G;\ww).
\end{equation*}
\end{lemma}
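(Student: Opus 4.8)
\medskip

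\noindent\emph{Proof plan.} Since the inequality is trivial when $k=\ell$, I would assume $k<\ell$, which forces $k<n$. The plan is to argue through the dual formulation~\eqref{varthetaKdual}. By SDP strong duality, $\vartheta_k(G;\ww)$ equals the optimal value of~\eqref{varthetaKdual} at parameter $k$, and since the primal~\eqref{varthetaKprimal} has a Slater point this maximum is attained; let $\xX$ be a maximizer, so $\langle\sqrt{\ww}\sqrt{\ww}^\T,\xX\rangle=\vartheta_k(G;\ww)$, $\tr\xX=k$, $\0\preccurlyeq\xX\preccurlyeq\iI$, and $\xX_{ij}=0$ for all $ij\in E(G)$. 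It then suffices to build from $\xX$ a matrix $\xX'$ that is feasible for~\eqref{varthetaKdual} at parameter $\ell$ and satisfies $\langle\sqrt{\ww}\sqrt{\ww}^\T,\xX'\rangle\geq\langle\sqrt{\ww}\sqrt{\ww}^\T,\xX\rangle$, since then $\vartheta_\ell(G;\ww)\geq\langle\sqrt{\ww}\sqrt{\ww}^\T,\xX'\rangle\geq\vartheta_k(G;\ww)$. Everything below works verbatim for non-integer $k,\ell$, as~\eqref{varthetaKdual} is defined for all real parameters in $[0,|V|]$.

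The construction is a convex combination with the identity matrix: set $t:=(\ell-k)/(n-k)\in(0,1]$ and $\xX':=(1-t)\xX+t\iI$. I would then check the four constraints of~\eqref{varthetaKdual} in turn. First, $\tr\xX'=(1-t)k+tn=\ell$. Second, $\xX'\succcurlyeq\0$ since $\xX,\iI\succcurlyeq\0$ and $1-t,t\geq 0$. Third, $\iI-\xX'=(1-t)(\iI-\xX)\succcurlyeq\0$ because $\iI-\xX\succcurlyeq\0$ and $1-t\geq 0$; together with the previous point this gives $\0\preccurlyeq\xX'\preccurlyeq\iI$. Fourth, for every $ij\in E(G)$ we have $i\neq j$, so $\iI_{ij}=0$ and hence $\xX'_{ij}=(1-t)\xX_{ij}+t\iI_{ij}=0$. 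Note that the construction degrades gracefully at the boundary: when $k=0$ one has $\xX=\0$ and $\xX'=(\ell/n)\iI$, and when $\ell=n$ one has $t=1$ and $\xX'=\iI$.

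For the objective, since $\sqrt{\ww}\sqrt{\ww}^\T$ and $\iI-\xX$ are both positive semidefinite, their inner product is nonnegative, so with $t\geq 0$,
\[
  \langle\sqrt{\ww}\sqrt{\ww}^\T,\xX'\rangle-\langle\sqrt{\ww}\sqrt{\ww}^\T,\xX\rangle = t\,\langle\sqrt{\ww}\sqrt{\ww}^\T,\iI-\xX\rangle \geq 0 .
\]
Combining this with the feasibility of $\xX'$ yields $\vartheta_\ell(G;\ww)\geq\vartheta_k(G;\ww)$, as desired.

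The one point that needs care is the very first step, namely that $\vartheta_k(G;\ww)$ coincides with (and is attained in)~\eqref{varthetaKdual}, precisely at the boundary parameters $k\in\{0,|V|\}$ where~\eqref{varthetaKdual} has no Slater point. I expect no real difficulty here: the primal~\eqref{varthetaKprimal} always admits a Slater point and has finite optimal value (it lies between $0$ and $\ww^\T\1$), so strong duality still gives equality of the optimal values and attainment on the dual side; moreover, one could instead run the whole argument with a maximizing sequence $\xX^{(m)}$ in place of an exact maximizer, sidestepping attainment altogether. The remaining steps---the feasibility verification and the positive-semidefinite inner-product bound---are routine. A purely primal approach through~\eqref{varthetaKoriginal} and Ky Fan's theorem also looks feasible but messier, since it would require showing that the intermediate eigenvalues $\lambda_{k+1},\dots,\lambda_\ell$ of an optimal matrix sum to something nonnegative, which is less transparent than the convex-combination trick used here.
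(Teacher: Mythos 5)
Your proposal is correct and follows essentially the same route as the paper: the identical convex combination $(1-t)\xX + t\iI$ with $t=(\ell-k)/(n-k)$ applied to a solution of the dual formulation~\eqref{varthetaKdual}, with the only cosmetic difference that you justify the objective increase via $\langle\sqrt{\ww}\sqrt{\ww}^\T,\iI-\xX\rangle\geq 0$ while the paper uses $\ww^\T\1\geq\vartheta_k(G;\ww)\geq\langle\sqrt{\ww}\sqrt{\ww}^\T,\xX\rangle$.
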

\begin{proof}
We may assume that $k < \ell$. With $\xX$ feasible for the formulation~\eqref{varthetaKdual} of $\vartheta_k(G;\ww)$, let
\[
    \zZ = \left(1 - \frac{\ell-k}{n-k} \right) \xX + \frac{\ell-k}{n-k} \iI.
\]
It is easy to see that $\zZ$ is feasible for the formulation~\eqref{varthetaKdual} of $\vartheta_{\ell}(G;\ww)$ with objective value
    \begin{align*}
	    \big\langle \sqrt{\ww}\sqrt{\ww}^\T, \zZ \big\rangle
            & = \left(1 - \frac{\ell-k}{n-k}\right) \big\langle \sqrt{\ww}\sqrt{\ww}^\T, \xX \big\rangle + \frac{\ell-k}{n-k} \ww^\T \1 \geq \big\langle \sqrt{\ww}\sqrt{\ww}^\T, \xX \big\rangle
    \end{align*}
since $\ww^\T \1 \geq \vartheta_k(G;\ww) \geq \big\langle \sqrt{\ww}\sqrt{\ww}^\T, \xX \big\rangle$.
\end{proof}

\section{Bound for the fractional chromatic number} \label{sec:chi}

Let now $h(G)$ be the smallest integer $m \geq 1$ such that, for all $\zZ \in \Sds^V$ such that $\zZ_{ij} = 0 \ \forall ij \notin E(G)$, we have
\[\lambda_1(\zZ) + \lambda_n(\zZ) + \dotsb + \lambda_{n - m + 2}(\zZ) \leq 0.
\]
Note that if $\zZ$ is fixed to be the adjacency matrix, this integer $m$ has been called $1+\kappa$ in \cite{WocjanElphickDarbariSpectralBoundsQuantumPart2}. We have decided to introduce the notation $h(G)$ to make it explicit that we are dealing with a different, optimized version. 

As a consequence of the proof of Hoffman's theorem (see for instance \cite[Proposition 3.6.3]{BrouwerHaemers} for a proof), it follows that
\[h(G) \leq \chi(G).\]

It is then natural to ask whether this parameter is at least as good as $\vartheta(\ov{G})$ to lower bound $\chi(G)$. The answer is affirmative.

\begin{theorem}
\label{thm:1}
Let $h(G)$ be as defined above. Then
\[
	\lceil \vartheta(\ov{G}) \rceil \leq h(G) \leq \chi(G).
\]
\end{theorem}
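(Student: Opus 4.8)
Since the inequality $h(G)\le\chi(G)$ is precisely the Hoffman-type sum-of-eigenvalues bound recalled just before the statement — a matrix $\zZ$ vanishing off the diagonal at non-edges vanishes within every independent set, hence within every colour class of a proper colouring, and that is all the quotient-matrix argument of \cite[Proposition~3.6.3]{BrouwerHaemers} uses — the plan is to concentrate entirely on $\lceil\vartheta(\ov G)\rceil\le h(G)$. As $h(G)$ is an integer, it is enough to prove $\vartheta(\ov G)\le h(G)$. Put $m:=h(G)$. Applying the defining formula of $\vartheta$ to $\ov G$,
\[
 \vartheta(\ov G)=\max\bigl\{\langle\jJ,\xX\rangle:\ \tr\xX=1,\ \xX\succcurlyeq\0,\ \xX_{ij}=0\ \forall\,ij\in E(\ov G)\bigr\},
\]
so I would fix a feasible $\xX$ and aim to show $\langle\jJ,\xX\rangle\le m$.

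The core of the proof is to manufacture, out of $\xX$, a matrix to which the definition of $h(G)$ can be applied. Decompose $\xX=B+\operatorname{diag}(\xX)$ into its off-diagonal part $B$ and its diagonal. Since $\xX$ vanishes off the diagonal at non-edges of $G$, the matrix $B$ has zero diagonal and is supported on $E(G)$; hence $B$ — and, more usefully, $DBD$ for \emph{any} diagonal matrix $D$ — is an admissible choice of $\zZ$ in the definition of $h(G)$. I would take $D$ with $D_{ii}=\xX_{ii}^{-1/2}$ when $\xX_{ii}>0$ and $D_{ii}=0$ otherwise, and set $\zZ:=DBD$. The point of this rescaling is the identity $\zZ=D\xX D-D\operatorname{diag}(\xX)D$, in which $D\xX D\succcurlyeq\0$ while $D\operatorname{diag}(\xX)D$ is the diagonal $0$--$1$ projection onto $\{i:\xX_{ii}>0\}$, so $\0\preccurlyeq D\operatorname{diag}(\xX)D\preccurlyeq\iI$; consequently $\zZ\succcurlyeq-\iI$, i.e.\ every eigenvalue of $\zZ$ is at least $-1$. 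Feeding $\zZ$ into the inequality defining $h(G)$ (legitimate because $m=h(G)$) gives $\lambda_1(\zZ)+\lambda_n(\zZ)+\dotsb+\lambda_{n-m+2}(\zZ)\le0$; the $m-1$ trailing summands are eigenvalues of $\zZ$, hence each $\ge-1$, so they sum to at least $-(m-1)$, and therefore $\lambda_1(\zZ)\le m-1$.

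It then remains to carry this bound back to $\langle\jJ,\xX\rangle$. One has $\langle\jJ,\xX\rangle=\1^\T B\,\1+\tr\xX=\1^\T B\,\1+1$, so it suffices to bound $\1^\T B\,\1$. Let $\mathbf r\in\Rds^V$ be the vector with $\mathbf r_i=\xX_{ii}^{1/2}$, so that $\|\mathbf r\|^2=\tr\xX=1$. A brief calculation — using that $D_{ii}\mathbf r_i=1$ wherever $\mathbf r_i\ne0$, together with the fact that the $i$-th row of $B$ vanishes whenever $\xX_{ii}=0$ (because $\xX\succcurlyeq\0$) — gives $\1^\T B\,\1=\mathbf r^\T\zZ\,\mathbf r$. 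Hence $\1^\T B\,\1=\mathbf r^\T\zZ\,\mathbf r\le\|\mathbf r\|^2\,\lambda_1(\zZ)\le m-1$, so $\langle\jJ,\xX\rangle\le m$. Taking the maximum over feasible $\xX$ yields $\vartheta(\ov G)\le m=h(G)$, and rounding up gives $\lceil\vartheta(\ov G)\rceil\le h(G)$.

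The step I expect to be decisive — indeed the only genuinely nonroutine point — is the choice of which matrix to insert into the definition of $h(G)$. The obvious candidate $\zZ=B$ only produces $\lambda_1(B)\le(m-1)\max_i\xX_{ii}$, which together with $\1^\T B\,\1\le n\,\lambda_1(B)$ is far too weak; the diagonal rescaling $\zZ=DBD$ is exactly what makes the eigenvalue floor $\zZ\succcurlyeq-\iI$ and the unit-norm identity $\1^\T B\,\1=\mathbf r^\T\zZ\,\mathbf r$ hold simultaneously. The degenerate vertices with $\xX_{ii}=0$ are a mere bookkeeping matter, absorbed by the convention $D_{ii}=0$ at such coordinates (equivalently, by restricting to the principal submatrix of $\xX$ on $\{i:\xX_{ii}>0\}$).
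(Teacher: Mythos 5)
Your proof is correct, but it follows a genuinely different route from the paper's. The paper proves $\lceil \vartheta(\ov{G})\rceil \leq h(G)$ via Lovász's eigenvalue-ratio characterization $\vartheta(\ov{G}) = \max\{1 - \lambda_1(\zZ)/\lambda_n(\zZ)\}$ over nonzero symmetric $\zZ$ supported on $E(G)$: taking an optimal $\zZ^*$, the identity $\lambda_n(\zZ^*)(\vartheta(\ov{G})-1)+\lambda_1(\zZ^*)=0$, combined with $\lceil\vartheta(\ov{G})\rceil-2 < \vartheta(\ov{G})-1$ and $\lambda_n(\zZ^*)<0$, makes the Hoffman sum for $m=\lceil\vartheta(\ov{G})\rceil-1$ strictly positive, so $h(G)>m$. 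You instead work directly with the primal SDP definition of $\vartheta(\ov{G})$: for each feasible $\xX$ you form the admissible matrix $\zZ = D B D$ from the rescaled off-diagonal part, use $\zZ \succcurlyeq -\iI$ together with the defining inequality of $h(G)$ at $m=h(G)$ to get $\lambda_1(\zZ)\leq m-1$, and transfer this back through $\1^\T B\,\1 = \mathbf{r}^\T \zZ\, \mathbf{r} \leq \lambda_1(\zZ)$, concluding $\vartheta(\ov{G})\leq h(G)$ and rounding since $h(G)$ is an integer; the rescaling, the zero-row observation at vertices with $\xX_{ii}=0$, and the eigenvalue floor are all sound, and $\zZ=DBD$ has zero diagonal, so it is admissible under the definition of $h(G)$. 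What each approach buys: yours is self-contained relative to the definition of $\vartheta$ stated in the introduction, needs no citation of the ratio characterization, and covers the edgeless case without a separate assumption; the paper's is shorter and, crucially for what follows, exhibits an explicit witness matrix with a strictly positive Hoffman sum at $m=\lceil\vartheta(\ov{G})\rceil-1$ (the inequality \eqref{eq:1} recorded after the theorem), which is then reused in the remark after Theorem~\ref{thm:maxtheta} and in the final corollary --- your universally quantified argument on the dual side does not directly produce such a witness, though none is required for the statement of Theorem~\ref{thm:1} itself.
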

\begin{proof}
We may assume that $G$ has at least one edge. In one of the several equivalent characterizations of $\vartheta(G)$ in \cite{LovaszShanon}, Lovász showed that
\[
\vartheta(\overline{G}) = \max \left\{ 1 - \frac{\lambda_1(\zZ)}{\lambda_n(\zZ)} : \zZ \in \Sds^V \setminus \{0\},\ \zZ_{ij} = 0 \ \forall ij \notin E(G) \right\}.
\]
Let $\zZ^* \in \Sds^V$ be an optimal solution. Then $\lambda_n(\zZ^*)(\vartheta(\ov{G})-1) + \lambda_1(\zZ^*) = 0$. Thus
\[\lambda_n(\zZ^*) (\lceil \vartheta(\ov{G}) \rceil -2) + \lambda_1(\zZ^*) > 0. \]
Hence
\[\lambda_n(\zZ^*) + \lambda_{n-1}(\zZ^*) + \dotsb + \lambda_{n - (\lceil \vartheta(\ov{G}) \rceil - 1) + 2}(\zZ^*) + \lambda_1(\zZ^*) > 0, \]
therefore
\[ h(G) \geq \lceil \vartheta(\ov{G}) \rceil . \qedhere\]
\end{proof}

Note that the proof shows that, if $G = (V,E)$ has at least one edge and  $m = \lceil \vartheta(\ov{G}) \rceil -1$, there is $\zZ \in \Sds^V$ such that $\zZ_{ij} = 0$ for each $ij \notin E$ and
\begin{equation}
	\label{eq:1}
	\lambda_1(\zZ) + \lambda_n(\zZ) + \dotsb + \lambda_{n - m + 2}(\zZ) > 0.
\end{equation}

The parameter $h(G)$ appears to be interesting, but it is not even clear how, or even if, it can be computed. We will show below how $h(G)$ is related to the function $\vartheta_k(G;\ww)$.

\begin{theorem}
\label{thm:maxtheta}
	Let $G$ be a graph, and $h(G)$ be defined as above. In both $\min$ and $\max$ below, $k$ is an integer between $1$ and $n$. Then
        \begin{equation}
          \label{eq:2}
          \min\{ k : \exists~\ww \geq \0, \ww \neq 0, \vartheta_k(G;\ww) \geq \ww^\T \1 \} \ \leq \ h(G)
        \end{equation}
	and
        \begin{equation}
          \label{eq:3}
          h(G) \ \leq \ \min\{ k : \forall~\ww \geq \0, \vartheta_k(G;\ww) \geq \ww^\T \1 \}.
        \end{equation}
\end{theorem}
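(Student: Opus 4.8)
The plan is to prove~\eqref{eq:2} by a one‑line witness and to prove~\eqref{eq:3} by combining the dual semidefinite program~\eqref{varthetaKdual} for $\vartheta_k$ with a Ky Fan inequality restricted to a hyperplane. Throughout, call $\zZ \in \Sds^V$ \emph{admissible} if $\zZ_{ij} = 0$ whenever $ij \notin E(G)$; this forces the diagonal of $\zZ$ to vanish, so $\tr\zZ = 0$. For~\eqref{eq:2} I would simply take $\ww$ to be the indicator vector of a single vertex $v$, so that $\sqrt{\ww} = \ww$ and $\ww^\T\1 = 1$; for every admissible $\zZ$ one has $\lambda_1(\ww\ww^\T + \zZ) \ge \ww^\T(\ww\ww^\T + \zZ)\ww = 1 + \zZ_{vv} = 1$, with equality at $\zZ = \0$, so $\vartheta_1(G;\ww) = 1 = \ww^\T\1$, which places $k = 1$ in the set on the left of~\eqref{eq:2}. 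Since $h(G) \ge 1$ by definition, \eqref{eq:2} follows.

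For~\eqref{eq:3}, let $K$ denote the right‑hand minimum. It is well defined because $k = n$ qualifies: for $m = n$ the Hoffman sum is $\tr\zZ = 0$ (zero diagonal), so $h(G) \le n$ unconditionally, and correspondingly $\vartheta_n(G;\ww) = \ww^\T\1$. If $K = n$ we are done, so assume $1 \le K \le n-1$, which is precisely the range in which the dual~\eqref{varthetaKdual} attains its optimum. To get $h(G) \le K$ it suffices to check that $m = K$ satisfies the condition defining $h(G)$, i.e. that $\lambda_1(\zZ) + \lambda_n(\zZ) + \dotsb + \lambda_{n-K+2}(\zZ) \le 0$ for every admissible $\zZ$; replacing $\zZ$ by $-\zZ$ (also admissible) and using $\lambda_i(-\zZ) = -\lambda_{n+1-i}(\zZ)$, this is equivalent to $\lambda_1(\zZ) + \lambda_2(\zZ) + \dotsb + \lambda_{K-1}(\zZ) + \lambda_n(\zZ) \ge 0$ for every admissible $\zZ$ — equivalently, since $\tr\zZ = 0$, to $\lambda_K(\zZ) + \dotsb + \lambda_{n-1}(\zZ) \le 0$. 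As this depends only on the spectrum of $\zZ$, and conjugating $\zZ$ by a $\pm 1$ diagonal matrix alters neither its spectrum nor its support, I may assume $\zZ$ has a unit eigenvector $\uu \ge \0$ for its least eigenvalue $\lambda_n(\zZ)$.

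The main step is then as follows. Let $\ww$ be the entrywise square of $\uu$, so $\sqrt{\ww} = \uu$ and $\ww^\T\1 = \|\uu\|^2 = 1$. By the choice of $K$ we have $\vartheta_K(G;\ww) \ge 1$, and since $\vartheta_K(G;\ww) \le \ww^\T\1 = 1$ always, equality holds; hence~\eqref{varthetaKdual} has an optimal solution $\xX$ with $\tr\xX = K$, with $\xX_{ij} = 0$ for all $ij \in E(G)$, with $\0 \preccurlyeq \xX \preccurlyeq \iI$, and with $\langle \uu\uu^\T, \xX\rangle = 1 = \|\uu\|^2$; the last equality forces $\xX\uu = \uu$ because $\xX \preccurlyeq \iI$. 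Two observations combine. First, $\langle \zZ, \xX\rangle = 0$ — $\zZ$ is supported on $E(G)$ and vanishes on the diagonal, while $\xX$ vanishes on $E(G)$ — so $\langle \zZ, \iI - \xX\rangle = \tr\zZ = 0$. Second, $\iI - \xX$ is positive semidefinite, has trace $n-K$, and kills $\uu$. Now any $\yY$ with $\0 \preccurlyeq \yY \preccurlyeq \iI$, $\tr\yY = n-K$ and $\yY\uu = 0$ is supported on $\uu^\perp$, so over all such $\yY$ the value $\langle \zZ, \yY\rangle$ is minimized (Ky Fan applied inside $\uu^\perp$) by the sum of the $n-K$ least eigenvalues of the compression of $\zZ$ to $\uu^\perp$; since $\uu$ is a least eigenvector of $\zZ$, that compression has eigenvalues $\lambda_1(\zZ) \ge \dotsb \ge \lambda_{n-1}(\zZ)$, whose $n-K$ smallest are $\lambda_K(\zZ), \dotsc, \lambda_{n-1}(\zZ)$. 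Therefore $\lambda_K(\zZ) + \dotsb + \lambda_{n-1}(\zZ) \le \langle \zZ, \iI - \xX\rangle = 0$, which is exactly what the reduction asked for.

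The delicate points I anticipate are: (i) recognizing that one must pass to $-\zZ$ and use the \emph{least} eigenvector of $\zZ$ — the naive choice (the top eigenvector) makes the Ky Fan step degenerate into the vacuous inequality $\lambda_{K+1}(\zZ) + \dotsb + \lambda_n(\zZ) \le 0$; (ii) the restricted Ky Fan inequality, namely that imposing $\yY\uu = 0$ simply transports the minimization into the hyperplane $\uu^\perp$, where the compression of $\zZ$ is a matrix with a prescribed spectrum; and (iii) separating out the boundary values $K = n$ and $K = 1$, both of which are immediate. One could instead avoid~\eqref{varthetaKdual} altogether by feeding $\vartheta_K(G; t\ww) \ge t$ into a first‑order eigenvalue perturbation of $t\,\uu\uu^\T + \zZ$ and letting $t \to \infty$, but the dual argument above is shorter and stays within the SDP framework of the paper.
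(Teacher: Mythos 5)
Your proof of \eqref{eq:3} is correct, and it takes a genuinely different route from the paper's. The paper argues in the contrapositive: with $m=h(G)-1$ it picks an admissible $\zZ$ violating Hoffman's condition at level $m$, conjugates by a $\pm1$ diagonal matrix so that a top eigenvector is nonnegative, lets $\sqrt{\ww}$ be a suitably scaled multiple of that eigenvector, and exhibits $-\zZ$ as a feasible point of \eqref{varthetaKoriginal} certifying $\vartheta_m(G;\ww)<\ww^\T\1$; Lemma~\ref{lem:monotonicity} then extends this to all $k\le m$. You instead work directly at $K=\min\{k:\forall\,\ww\ge\0,\ \vartheta_k(G;\ww)\ge\ww^\T\1\}$: for each admissible $\zZ$ you take $\ww$ to be the entrywise square of a nonnegative unit eigenvector $\vv$ for $\lambda_n(\zZ)$, use attainment and strong duality of \eqref{varthetaKdual} (legitimate for $1\le K\le n-1$, with $K=n$ disposed of separately) to get an optimal $\xX$ with $\xX\vv=\vv$, and bound $\lambda_K(\zZ)+\dotsb+\lambda_{n-1}(\zZ)\le\langle\zZ,\iI-\xX\rangle=0$ by Ky Fan restricted to the invariant hyperplane $\vv^{\perp}$; the preliminary reduction of Hoffman's condition to this inequality via $\zZ\mapsto-\zZ$ and $\tr\zZ=0$ is also right. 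Your route dispenses with Lemma~\ref{lem:monotonicity} at the cost of invoking duality and attainment for \eqref{varthetaKdual}, and in spirit it is closer to the paper's proof of \eqref{eq:2} (dual optimum plus eigenvector identities) than to its proof of \eqref{eq:3}.

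For \eqref{eq:2}, your one-line witness does prove the inequality as printed, but it also shows that the printed inequality is vacuous: with $\ww$ the characteristic vector of a single vertex, every admissible $\zZ$ gives $\lambda_1(\sqrt{\ww}\sqrt{\ww}^\T+\zZ)\ge 1$ (look at the corresponding diagonal entry), so the minimum on the left of \eqref{eq:2} always equals $1$, and $1\le h(G)$ holds by definition. This is almost certainly not what the authors intended: the theorem's preamble refers to ``both $\min$ and $\max$'' although only minima appear, and the paper's proof of \eqref{eq:2} establishes the substantive implication that if $\vartheta_m(G;\ww)<\ww^\T\1$ for \emph{every} nonzero $\ww\ge\0$, then some admissible $\zZ$ makes Hoffman's sum at level $m$ positive (via a joint optimization over $(\xX,\ww)$ and complementary slackness). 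So while your argument is a valid proof of the literal statement, be aware that it does not recover the content of the paper's argument for \eqref{eq:2}; the mismatch points to a misstatement in the theorem rather than to a gap in what you wrote.
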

\begin{proof}
	We show~\eqref{eq:2}. Let $m$ be the $\min$ in the statement minus one. We may assume $m \geq 1$, otherwise $m = 0$ and the inequality follows immediately. Consider the optimization problem
\sdp
{\max}
{\langle \sqrt{\ww}\sqrt{\ww}^\T, \xX \rangle }
{
\xX \in \Sds^V, \nonumber \\
& \tr \xX = m , \nonumber \\
& \xX_{ij} = 0 \ \forall ij \in E(G), \nonumber \\
& \0 \preccurlyeq \xX \preccurlyeq \iI, \nonumber \\
& \ww \geq 0, \nonumber \\
& \ww^\T \1 = 1. \nonumber}
        The feasible region is compact and the objective function is continuous, so an optimum solution $(\xX,\ww)$ exists.  By conjugating $\xX$ by a suitable diagonal matrix of entries $\pm 1$, we can assume there is an eigenvector $\vv$ corresponding to $\lambda_1(\xX)$ with $\vv \geq \0$.  Hence, $\sqrt{\ww}$ is an eigenvector for~$\xX$ with eigenvalue strictly smaller than $1$ by the choice of~$m$.  Thus,
	\begin{equation}\label{eq:smalleigen}
		\vartheta_m(G;\ww) = \lambda_1(\xX) < 1.
	\end{equation}
        Let $\zZ$, $\yY$ and $\eta$ optima for the formulation of $\vartheta_m(G; \ww)$ given in \eqref{varthetaKprimal}.  We may assume that $\yY$ is given by \eqref{eq:specdecompY}. If $\mu_1 \geq \dotsm \geq \mu_n$ are the eigenvalues of $(\zZ + \sqrt{\ww}\sqrt{\ww}^\T)$, recall from \eqref{eq:specdecompZ} and \eqref{eq:specdecompY} that $\eta = \mu_m$, and from \eqref{varthetaKoriginal} that
	\begin{equation}\label{eq:eigenineq}
		\mu_1 + \dotsb + \mu_m = \vartheta_m(G;\ww) < 1.
	\end{equation}
	Complementary slackness implies that
	\[
		\yY = \yY \xX,
	\]
        which combined with~\eqref{eq:smalleigen} yields $\yY \sqrt{\ww} = \0$. Also from complementary slackness, we have that
	\[
		(\eta \iI + \yY - (\zZ + \sqrt{\ww}\sqrt{\ww}^\T)) \xX = \0,
	\]
	and right-multiplying by $\sqrt{\ww}$, we get
	\[
		\zZ \sqrt{\ww} = (\mu_m - \ww^\T \1) \sqrt{\ww}.
	\]
	Thus the largest eigenvalue of $-\zZ$ is at least $\ww^\T \1 - \mu_m$, and $\zZ$ and $\zZ + \sqrt{\ww}\sqrt{\ww}^\T$ have a common basis of eigenvectors. Hence
	\[
		\lambda_1(-\zZ) + \sum_{i = 1}^{m-1} \lambda_{n+1-i}(-\zZ) \geq 1 - \mu_m - \sum_{i = 1}^{m-1} \mu_{i} > 0,
	\]
	where the last inequality follows from \eqref{eq:eigenineq}.

	Assume now $m = h(G) - 1 \geq 1$, and let $\zZ$ be a symmetric matrix with $\zZ_{ij} = 0$ for all $ij \notin E(G)$ such that
        \begin{equation}
          \label{eq:4}
          \lambda_1(\zZ) + \sum_{i = 1}^{m-1} \lambda_{n+1-i}(\zZ) > 0.
        \end{equation}
	Upon conjugating $\zZ$ by a suitable diagonal matrix of entries $\pm 1$, we can assume there is an eigenvector $\vv$ corresponding to $\lambda_1(\zZ)$ with $\vv \geq \0$ (note that $\zZ$ conjugated by the diagonal matrix still satisfies the hypotheses and inequality above). Let $\ww \geq \0$ with $\sqrt{\ww}$ a scalar multiple of $\vv$ scaled so that ${\ww^\T \1 - \lambda_1(\zZ) \geq -\lambda_n(\zZ)}$. Thus,
	\[
		\vartheta_m(G;\ww) \leq \sum_{i = 1}^m \lambda_i (\sqrt{\ww}\sqrt{\ww}^\T - \zZ) = \ww^\T \1 - \lambda_1(\zZ) - \sum_{i = 1}^{m-1} \lambda_{n+1-i}(\zZ) < \ww^\T \1.
	\]

	Hence, by Lemma \ref{lem:monotonicity}, the inequality~\eqref{eq:3} follows.
\end{proof}

Note that the proof of~\eqref{eq:3} shows that, if there is $\zZ \in \Sds^V$ such that $\zZ_{ij} = 0$ for each $ij \notin E$ and~\eqref{eq:4} holds, then there is $\ww \geq \0$ such that \(\vartheta_m(G;\ww) < \ww^\T \1\).

We now apply our technology to show that $h(G)$ is no better that $\lceil \chi_f(G) \rceil$ to lower bound $\chi(G)$.

\begin{theorem}
  \label{thm:2}
	For a graph $G$ and $k = \chi_f(G)$, one has $\vartheta_k(G;\ww) \geq \ww^\T \1$ for each $\ww \geq \0$.
\end{theorem}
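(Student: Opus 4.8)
The plan is to lower-bound $\vartheta_k(G;\ww)$ through its dual formulation~\eqref{varthetaKdual}. By weak SDP duality, $\vartheta_k(G;\ww)$ is at least the objective value of any $\xX \in \Sds^V$ with $\tr \xX = k$, $\xX_{ij} = 0$ for all $ij \in E(G)$, and $\0 \preccurlyeq \xX \preccurlyeq \iI$; so it suffices to produce, for each $\ww \in \Rds_+^V$, one such $\xX$ with $\langle \sqrt{\ww}\sqrt{\ww}^\T, \xX\rangle = \ww^\T \1$. (Weak duality requires no constraint qualification, so this also covers the degenerate case $k = \chi_f(G) = |V|$, that is, $G = K_n$, for which~\eqref{varthetaKdual} has no Slater point.) Together with the inequality $\vartheta_k(G;\ww) \leq \ww^\T \1$ recorded just before Lemma~\ref{lem:monotonicity}, this would in fact give equality.

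I would build $\xX$ from a fractional colouring. Use the standard linear-programming description of $\chi_f(G)$ with tight covering constraints,
\[
  \chi_f(G) \ = \ \min\Big\{ \1^\T y \ :\ y \in \Rds_+^{\mathcal I},\ \ \sum_{S \ni v} y_S = 1 \ \text{ for every } v \in V \Big\},
\]
where $\mathcal I$ is the family of nonempty independent sets of $G$; fix an optimal $y$, so that $k = \chi_f(G) = \sum_{S} y_S$. Let $D \in \Sds^V$ be the diagonal matrix with $D_{vv} = \sqrt{w_v}$, and for $S \in \mathcal I$ with $y_S > 0$ write $e_S \in \{0,1\}^V$ for its characteristic vector and set $\ww(S) := \sum_{v \in S} w_v$. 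Define $P_S := \ww(S)^{-1}(D e_S)(D e_S)^\T$ if $\ww(S) > 0$, and $P_S := e_v e_v^\T$ for an arbitrarily fixed $v \in S$ if $\ww(S) = 0$; then put $\xX := \sum_{S : y_S > 0} y_S P_S$. In both cases $P_S \succcurlyeq \0$, $\tr P_S = 1$, and $(P_S)_{ij} = 0$ whenever $ij \in E(G)$ — the last because $S$ is independent (and $e_v e_v^\T$ is diagonal). Hence $\xX$ is symmetric, $\xX \succcurlyeq \0$, $\xX_{ij} = 0$ for $ij \in E(G)$, and $\tr \xX = \sum_S y_S = k$. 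The identity $\langle \sqrt{\ww}\sqrt{\ww}^\T, \xX\rangle = \ww^\T \1$ then follows by double counting: one checks $\sqrt{\ww}^\T P_S \sqrt{\ww} = \ww(S)$ for each $S$ (using $e_S^\T D \sqrt{\ww} = \ww(S)$), so $\langle \sqrt{\ww}\sqrt{\ww}^\T, \xX\rangle = \sum_S y_S\, \ww(S) = \sum_{v \in V} w_v \sum_{S \ni v} y_S = \ww^\T \1$.

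The step I expect to be the crux is verifying $\xX \preccurlyeq \iI$. For a unit vector $z$, Cauchy--Schwarz gives $z^\T P_S z = \ww(S)^{-1}\bigl(\sum_{v \in S}\sqrt{w_v}\, z_v\bigr)^2 \leq \sum_{v \in S} z_v^2$ when $\ww(S) > 0$, and trivially $z^\T P_S z = z_v^2 \leq \sum_{u \in S} z_u^2$ otherwise; summing over $S$,
\[
  z^\T \xX z \ \leq \ \sum_{S : y_S > 0} y_S \sum_{v \in S} z_v^2 \ = \ \sum_{v \in V} z_v^2 \sum_{S \ni v} y_S \ = \ \sum_{v \in V} z_v^2 \ = \ 1 .
\]
Here the tightness $\sum_{S \ni v} y_S = 1$ is \emph{indispensable}: with only $\sum_{S \ni v} y_S \geq 1$ the bound would read $z^\T \xX z \leq \sum_{v} z_v^2\bigl(\sum_{S \ni v} y_S\bigr)$, which may exceed $1$, so that $\xX$ would no longer be feasible for~\eqref{varthetaKdual}. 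Thus in a full write-up the one point that needs care is the justification that an optimal fractional colouring may be taken tight at every vertex; this is exactly what the $b$-fold colouring description of $\chi_f(G)$ furnishes, and, failing that, one can always replace a set $S \ni v$ by $S \setminus \{v\}$ whenever $\sum_{S \ni v} y_S > 1$ without changing $\1^\T y$. The remaining details — notably the $\ww(S) = 0$ corner case treated above — are routine.
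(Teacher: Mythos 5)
Your construction is correct and is in essence the same one the paper uses: your $\xX = \sum_S y_S P_S$ with $P_S = \ww(S)^{-1}\Diag(\xx_S)\sqrt{\ww}\sqrt{\ww}^\T\Diag(\xx_S)$ is exactly the matrix $\mM = \ov{\nN}_\ww \Diag(\yy)\ov{\nN}_\ww^\T$ built in the paper from an exact fractional colouring ($\nN\yy = \1$), and both proofs conclude by exhibiting it as a feasible solution of~\eqref{varthetaKdual} with objective value $\ww^\T\1$. Where you genuinely diverge is in the verification of the one nontrivial constraint $\xX \preccurlyeq \iI$: the paper argues via Perron--Frobenius, using nonnegativity of $\mM$ together with the identity $\mM\sqrt{\ww} = \sqrt{\ww}$ to identify $1$ as the largest eigenvalue, whereas you prove $z^\T\xX z \leq \|z\|^2$ directly by Cauchy--Schwarz on each rank-one term and then sum using the tight covering $\sum_{S\ni v} y_S = 1$. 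Your route is more elementary and sidesteps the support/irreducibility care that the Perron--Frobenius argument implicitly requires when $\ww$ has zero entries; you also handle the $\ww(S)=0$ columns explicitly (replacing them by diagonal units $\ee_v\ee_v^\T$ if that macro were available --- say, unit coordinate matrices), which keeps $\tr\xX$ exactly equal to $k$, a corner the paper's normalization of $\Diag(\sqrt{\ww})\nN$ silently glosses over. Your remarks on weak duality and on why an optimal fractional colouring can be taken with equality at every vertex are sound and match the paper's implicit use of the exact-cover formulation; both proofs need that tightness (the paper needs it for $\mM\sqrt{\ww}=\sqrt{\ww}$ just as you need it in the final summation), so your proposal is a complete and slightly more self-contained version of the same argument.
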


\begin{proof}
	Let $\ww \in \Rds_+^V$. Assume $\nN$ is the vertex-coclique incidence matrix of $G$, and let $\yy$ be a nonnegative vector such that $\nN \yy = \1$ and
\[
	\chi_f(G) = \1^\T \yy.
\]
Assume the matrix $\ov{\nN}_\ww$ is obtained from $\Diag(\sqrt{\ww})\nN$ upon normalizing each column. We then define $\mM = \ov{\nN}_\ww \Diag(\yy) \ov{\nN}_\ww^\T$. If $\cal S$ is the set of all cocliques, $\xx_S$ stands for the characteristic vector of a coclique $S$, and $\ww(S) = \ww^\T \xx_S$ is the weight of a coclique $S$, then
\[
	\mM = \sum_{\substack{S \in \mathcal{S} \\[2pt] \ww(S) \neq 0}} \yy(S) \frac{\Diag(\xx_S) \sqrt{\ww}\sqrt{\ww}^\T \Diag(\xx_S)}{\ww(S)}.
\]
We now proceed to verify the following claims:
\begin{itemize}
	\item $\tr \mM = \chi_f(G)$.
	\item[] This follows from $\tr \mM = \sum_{S \in \mathcal{S}} \yy(S) = \1^\T \yy = \chi_f(G)$.
	\item $\0 \preccurlyeq \mM \preccurlyeq \iI$.
	\item[] The matrix $\mM$ is positive semidefinite and non-negative, with positive diagonal entries so as long the corresponding entry in $\ww$ is non-zero. Thus any eigenvector which is strictly positive in the support of $\ww$ must correspond to a largest eigenvalue, as consequence of the Perron-Frobenius theory. The claim now follows from
	\[
		\mM \sqrt{\ww} = \sum_{S \in \mathcal{S}} \yy(S) \cdot \Diag(\xx_S) \sqrt{\ww} = \sqrt{\ww},
	\]
	as $\nN \yy = \1$.
	\item $\langle \sqrt{\ww}\sqrt{\ww}^\T, \mM \rangle = \ww^\T \1$.
	\item[] Immediate from the claim above.
\end{itemize}
The three claims and the immediate observation that no off-diagonal entry of $\mM$ corresponding to an edge is non-zero, imply that $\mM$ is feasible for \eqref{varthetaKdual} with objective value $\geq \ww^\T \1$.
\end{proof}

\begin{theorem}
  \label{thm:3}
	For a graph $G$, one has
        \[
          \min\{ k : \forall~\ww \geq \0, \vartheta_k(G;\ww)\} \leq \lceil \chi_f(G) \rceil.
        \]
        In particular,
        \[
          h(G) \leq \lceil \chi_f(G) \rceil.
        \]
\end{theorem}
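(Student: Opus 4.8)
The plan is to deduce this immediately from Theorem~\ref{thm:2} together with the monotonicity Lemma~\ref{lem:monotonicity}, and then to combine the resulting bound with inequality~\eqref{eq:3} of Theorem~\ref{thm:maxtheta} to obtain the ``in particular'' clause.

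First I would set $k_0 = \chi_f(G)$ and invoke Theorem~\ref{thm:2}, which gives $\vartheta_{k_0}(G;\ww) \geq \ww^\T \1$ for every $\ww \geq \0$. The only mild subtlety is that $k_0$ need not be an integer, whereas the minimum in the statement ranges over integers $k$ with $1 \leq k \leq n$; this is precisely what Lemma~\ref{lem:monotonicity} handles, since it is stated for real, not necessarily integral, arguments. Noting that $k_0 \leq \lceil \chi_f(G) \rceil \leq \chi(G) \leq n$, Lemma~\ref{lem:monotonicity} yields $\vartheta_{\lceil \chi_f(G) \rceil}(G;\ww) \geq \vartheta_{k_0}(G;\ww) \geq \ww^\T \1$ for all $\ww \geq \0$. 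Since $\lceil \chi_f(G) \rceil$ is an integer in $\{1,\dots,n\}$ (when $G$ has an edge; the edgeless case is trivial, as both sides equal $1$), it is an admissible choice of $k$ in the defining minimum, and therefore $\min\{ k : \forall~\ww \geq \0,\ \vartheta_k(G;\ww) \geq \ww^\T \1\} \leq \lceil \chi_f(G) \rceil$, which is the first assertion.

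For the second assertion I would simply chain this with~\eqref{eq:3}: Theorem~\ref{thm:maxtheta} gives $h(G) \leq \min\{ k : \forall~\ww \geq \0,\ \vartheta_k(G;\ww) \geq \ww^\T \1\}$, and the right-hand side is at most $\lceil \chi_f(G) \rceil$ by what was just shown, so $h(G) \leq \lceil \chi_f(G) \rceil$.

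I do not anticipate a genuine obstacle here: once Theorems~\ref{thm:2} and~\ref{thm:maxtheta} and Lemma~\ref{lem:monotonicity} are in hand, the argument is pure bookkeeping. The one point worth stating carefully is the passage from the real number $\chi_f(G)$ to its ceiling, together with the observation that $\lceil \chi_f(G) \rceil$ indeed lies in the admissible range $\{1,\dots,n\}$, so that it is a legitimate competitor in the minimum.
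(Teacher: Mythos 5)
Your proposal is correct and follows essentially the same route as the paper: apply Theorem~\ref{thm:2} at $k=\chi_f(G)$, use Lemma~\ref{lem:monotonicity} to pass to $\lceil \chi_f(G)\rceil$, and chain with inequality~\eqref{eq:3} of Theorem~\ref{thm:maxtheta} for the second assertion. Your extra remarks about non-integral $k$ and the admissible range are fine points the paper leaves implicit, but the argument is the same.
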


\begin{proof}
         For $m = \chi_f(G)$, one has $\vartheta_m(G;\ww) \geq \ww^\T \1$ for every $\ww \geq \0$ by Theorem~\ref{thm:2}.  For $k = \lceil \chi_f(G) \rceil$, by Lemma~\ref{lem:monotonicity} one has $\vartheta_k(G;\ww) \geq \ww^\T \1$ for every $\ww \geq \0$.  This proves the first inequality in the statement.  The second inequality follows from the first, together with the second inequality from Theorem~\ref{thm:maxtheta}.
\end{proof}

As a consequence of the preceding results and proofs, we obtain the following corollary.

\begin{corollary}

For every graph $G$ and for any $\xx \geq \0$, the smallest integer $k \geq 1$ so that $\vartheta_k(G;\xx) \geq \xx^\T \1$ is such that
\[
	k \leq \lceil \chi_f(G) \rceil.
\]
Moreover, there is a vector $\ww \geq \0$ such that the smallest integer $k \geq 1$ which gives $\vartheta_k(G;\ww) \geq \ww^\T \1$ is such that
\[
	\lceil \vartheta(\ov{G}) \rceil \leq k.
\]
\end{corollary}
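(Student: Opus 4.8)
The plan is to derive this corollary almost immediately by repackaging the three main theorems already established. The first assertion—that for a fixed $\xx \geq \0$ the smallest integer $k \geq 1$ with $\vartheta_k(G;\xx) \geq \xx^\T \1$ satisfies $k \leq \lceil \chi_f(G)\rceil$—is exactly the content of Theorem~\ref{thm:3} combined with Theorem~\ref{thm:2}. First I would invoke Theorem~\ref{thm:2} to note that with $m = \chi_f(G)$ one has $\vartheta_m(G;\xx) \geq \xx^\T\1$; then by Lemma~\ref{lem:monotonicity}, since $m \leq \lceil\chi_f(G)\rceil =: k_0$, it follows that $\vartheta_{k_0}(G;\xx) \geq \xx^\T\1$ as well. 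Hence the smallest \emph{integer} $k \geq 1$ achieving this inequality is at most $k_0 = \lceil \chi_f(G)\rceil$, which is the first displayed bound.

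For the second assertion, I would appeal to the remark following the proof of Theorem~\ref{thm:1}, namely that when $G$ has at least one edge there exists $\zZ \in \Sds^V$ with $\zZ_{ij}=0$ for all $ij \notin E$ and, for $m = \lceil\vartheta(\ov G)\rceil - 1$, the inequality~\eqref{eq:1} holds. (If $G$ has no edge, $\vartheta(\ov G) = 1$ and both sides are trivially handled.) Then I would apply the remark immediately after the proof of Theorem~\ref{thm:maxtheta}: from such a $\zZ$ one produces a vector $\ww \geq \0$ with $\vartheta_m(G;\ww) < \ww^\T\1$. By Lemma~\ref{lem:monotonicity} this in fact gives $\vartheta_j(G;\ww) < \ww^\T\1$ for every integer $j \leq m$, so no integer $k$ in the range $1 \leq k \leq m = \lceil\vartheta(\ov G)\rceil - 1$ satisfies $\vartheta_k(G;\ww) \geq \ww^\T\1$. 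Consequently the smallest integer $k \geq 1$ for which $\vartheta_k(G;\ww) \geq \ww^\T\1$ must be at least $m+1 = \lceil\vartheta(\ov G)\rceil$, which is the claimed lower bound.

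There is essentially no obstacle here; the corollary is a bookkeeping consequence of the earlier machinery, and the only minor care needed is the edge-case when $G$ is edgeless (handled directly) and the careful use of monotonicity to push the inequality $\vartheta_k \geq \ww^\T\1$ or its negation across the relevant integer range. I would write the argument in two short paragraphs mirroring the two displayed inequalities, citing Theorems~\ref{thm:2} and~\ref{thm:3} for the first and the post-proof remarks of Theorems~\ref{thm:1} and~\ref{thm:maxtheta} together with Lemma~\ref{lem:monotonicity} for the second.
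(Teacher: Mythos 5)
Your proposal is correct and follows essentially the same route as the paper: the first bound is exactly Theorem~\ref{thm:3} (which you re-derive via Theorem~\ref{thm:2} and Lemma~\ref{lem:monotonicity}, just as its proof does), and the second uses the remarks following Theorems~\ref{thm:1} and~\ref{thm:maxtheta} together with monotonicity, which is precisely the paper's argument. Your explicit treatment of the edgeless case is a harmless extra detail the paper leaves implicit.
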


\begin{proof}
  The smallest integer $k$ such that $\vartheta_k(G;\xx) \geq \xx^\T \1$ is smaller than or equal to the $\min$ in Theorem~\ref{thm:3}. Assume that $G$ has at least one edge. By the remark following the proof Theorem~\ref{thm:1}, there is $\zZ \in \Sds^V$ such that $\zZ_{ij} = 0$ for each $ij \notin E$ and~\eqref{eq:1} holds, with $m = \lceil \vartheta(\ov{G}) \rceil -1$.  The remark following the proof of Theorem~\ref{thm:maxtheta} shows that there is \(\ww \geq \0\) such that \(\vartheta_m(G;\ww) < \ww^\T \1\).  This proves the second statement.
\end{proof}

In particular, the case $\xx = \1$ giving a lower bound for $\chi(G)$ is a result obtained by Mohar and Poljak \cite[Corollary 4.16]{MoharPoljak}.

\section{Future work}

Mančinska and Roberson \cite{MancinskaRoberson} defined the projective rank $\xi_f(G)$ and showed that it lower bounds both $\chi_q(G)$ and $\chi_f(G)$. As seen in the work of Wocjan and Elphick \cite{WocjanElphick}, several spectral techniques that are typically applied to lower bound $\chi(G)$ also apply for $\xi_f(G)$, or its related parameter $\xi(G)$, the orthogonal rank. We therefore wonder if $h(G)$ lower bounds $\xi_f(G)$, or at least if it lower bounds $\xi(G)$ or $\chi_q(G)$, the quantum chromatic number.

Sum-of-eigenvalues bounds on graph parameters related to partitions are a common theme in combinatorics, see for instance the work of Bollobás and Nikiforov \cite{BollobasNikiforovInterlacing} and also Wocjan and Elphick \cite{WocjanElphick2}. We believe there is a common framework under which several of these bounds should fall, and that their optimized versions are usually possible upon varying the non-zero or diagonal entries of the adjacency matrix. The connection with semidefinite programming generally appears at this point. We see our work as a modest attempt to unveil some of the deeper connections we envision --- for example, our work implies that a ``fractional version" of interlacing is not only possible but also useful, and we wonder what other application it might encounter given how ubiquitous interlacing is in spectral graph theory.

Two of the authors of this work have recently coauthored the paper \cite{Benedetto} which formalizes a duality connection between Hoffman's bound $\chi(G) \geq 1 - (\lambda_1/\lambda_n)$ and an eigenvalue bound for $\alpha(G)$ also due to Hoffman and, independently, to Delsarte. See \cite{GodsilNewman} for an extensive treatment of this bound and some of its generalizations. We believe that the same duality should exist for $\vartheta_k(G;\ww)$ and if so, we ask if it leads to some new and interesting bounds to $\alpha_k(G)$. 

\section*{Acknowledgments}

This work was partially supported by Conselho Nacional de Desenvolvimento Científico e Tecnológico – CNPq (Proc.423833/2018-9). GC would like to acknowledge Monique Laurent for fruitful conversations about the topic of this paper during the workshop ``Analytical and combinatorial aspects of quantum information theory'' at the International Centre for Mathematical Sciences - Edinburgh, in September 2019. GC also acknowledges the support offered by ICMS to attend the aforementioned workshop. RG acknowledges the Master's scholarship granted by CNPq from 2019 to 2021. Finally, all authors acknowledge the extremely helpful comments from Clive Elphick upon the first version of this paper.


\end{document}